\newtheorem{Theorem}{Theorem}
\newtheorem{Proposition}[Theorem]{Proposition}
\newtheorem{Lemma}[Theorem]{Lemma}
\author{Aled Walker}
\title{A multiplicative analogue of Schnirelmann's theorem}
\begin{document}

\begin{abstract}
The classical theorem of Schnirelmann states that the primes are an additive basis for the integers. In this paper we consider the analogous multiplicative setting of the cyclic group $\left(\mathbb{Z}/ q\mathbb{Z}\right)^{\times}$, and prove a similar result. For all suitably large primes $q$ we define $P_\eta$ to be the set of primes less than $\eta q$, viewed naturally as a subset of $\left(\mathbb{Z}/ q\mathbb{Z}\right)^{\times}$. Considering the $k$-fold product set $P_\eta^{(k)}=\{p_1p_2\cdots p_k:p_i\in P_\eta \}$, we show that for $\eta \gg q^{-\frac{1}{4}+\epsilon}$ there exists a constant $k$ depending only on $\epsilon$ such that $P_\eta^{(k)}=\left(\mathbb{Z}/ q\mathbb{Z}\right)^{\times}$. Erd\H{o}s conjectured that for $\eta = 1$ the value $k=2$ should suffice: although we have not been able to prove this conjecture, we do establish that $P_1 ^{(2)}$ has density at least $\frac{1}{64}(1+o(1))$. We also formulate a similar theorem in almost-primes, improving on existing results. 
\end{abstract}
\maketitle
\section{Main Theorems}
For any abelian group $G$ written multiplicatively, and a subset $S\subseteq G$, we define the \emph{k-fold iterated product-set} $S^{(k)}$ to be the set $\{s_1s_2\cdots s_k:s_i\in S \}$. Letting $q$ be a large prime\footnote{The methods used to prove Theorems \ref{positive density theorem} and \ref{almost prime theorem} may be modified to the case $q$ composite, although the technical details become increasingly complicated. However, it is not immediately apparent that the rather oblique arguments used in Lemma \ref{coset proposition} admit such a modification. For simplicity we restrict to $q$ prime throughout.}, we are interested in when certain naturally defined subsets $S\subseteq\left(\mathbb{Z}/ q\mathbb{Z}\right)^{\times}$ generate the entire group, in the additive-combinatorial sense that $S^{(k)}= \left(\mathbb{Z}/ q\mathbb{Z}\right)^{\times}$ for some $k$. With certain sets $S$, most notably intervals, the case $k=2$ has been extensively studied, and is known as the \emph{modular hyperbola} problem -- see the survey of Shparlinski \cite{Sh12}. One can also consider the modular hyperbola problem for sets of primes, and in \cite{E-O-S87}, defining $P_1$ to be the set of all primes less than $q$,  Erd\H{o}s conjectured that $P_1^{(2)}=\left(\mathbb{Z}/ q\mathbb{Z}\right)^{\times}$. This conjecture is still open, even assuming the Generalised Riemann Hypothesis\footnote{However, on GRH, an easy Fourier-analytic method shows that $P_1^{(3)}=(\mathbb{Z}/q\mathbb{Z})^\times$.}. In this paper we show various unconditional partial results in the direction of Erd\H{o}s' conjecture, improving upon some existing work. \\

\textbf{Acknowledgements.} The author is very grateful to Adam Harper for suggesting a simplification to the original proof of Theorem \ref{positive density theorem} and to Igor Shparlinski for notifying him of the work in \cite{Sh13}. The author\footnote{Contact email address: walker@maths.ox.ac.uk} is a DPhil student at Oxford University, supported by EPSRC Grant MATH1415, and is much indebted to supervisor Ben Green for his continual support and encouragement. \\

Let us fix notation. As usual the letter $p$ will always denote a prime, and for a large prime $q$ and $\eta\leqslant 1$ we define $P_\eta=\{p:p<\eta q\}$. We reserve $q$ for this fixed large prime. \\

We now state the main results of the paper.
\begin{Theorem}
\label{positive density theorem}
Let $\epsilon>0$ and take $\eta = q^{-\frac{1}{4}+\epsilon}$. Then there exist constants $q_0(\epsilon)$ and $c(\epsilon)$ such that for $q\geqslant q_0(\epsilon)$ we have $\lvert P_\eta^{(2)} \rvert \geqslant c(\epsilon) q$. We calculate we may take $c(\epsilon)=(\frac{2\epsilon}{3+4\epsilon})^2(1+o(1))$, and so in particular may take $c(\frac{1}{4})=\frac{1}{64}(1+o(1))$. 
\end{Theorem}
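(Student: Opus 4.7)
The natural approach is Cauchy--Schwarz combined with a bound on character sums over primes. Let
\[ r(n) := \#\{(p_1, p_2) \in P_\eta^2 : p_1 p_2 \equiv n \pmod{q}\} \]
denote the product representation function of $P_\eta^{(2)}$, so that $P_\eta^{(2)} = \operatorname{supp}(r)$ and $\sum_n r(n) = |P_\eta|^2$. Cauchy--Schwarz gives
\[ |P_\eta^{(2)}| \;\geq\; \frac{\bigl(\sum_n r(n)\bigr)^2}{\sum_n r(n)^2} \;=\; \frac{|P_\eta|^4}{E}, \]
where $E = \sum_n r(n)^2$ is the multiplicative energy, i.e.\ the number of quadruples $(p_1,p_2,p_3,p_4) \in P_\eta^4$ with $p_1 p_2 \equiv p_3 p_4 \pmod{q}$. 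The theorem thus reduces to producing an appropriate upper bound for $E$.

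Expanding via orthogonality of Dirichlet characters modulo $q$ and setting $S(\chi) := \sum_{p < \eta q} \chi(p)$, one has
\[ E = \frac{1}{q-1}\sum_{\chi \bmod q} |S(\chi)|^4. \]
The principal character contributes exactly $|P_\eta|^4/(q-1)$. For the non-principal characters, Parseval's identity $\sum_\chi |S(\chi)|^2 = (q-1)|P_\eta|$ together with a pointwise bound $|S(\chi)| \leq T$ for $\chi\ne\chi_0$ gives $\sum_{\chi \neq \chi_0} |S(\chi)|^4 \leq T^2 (q-1) |P_\eta|$, hence $E \leq |P_\eta|^4/(q-1) + T^2 |P_\eta|$. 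The required bound for $T$ is a Vinogradov-type estimate for the character sum over primes. Since $x := \eta q = q^{3/4+\epsilon}$ lies above the classical Vinogradov threshold $q^{3/4}$, one applies Vaughan's or Heath--Brown's identity to decompose $\sum_{p \leq x} \chi(p)$ into Type~I and Type~II bilinear pieces, and bounds each via completion (P\'olya--Vinogradov or the large sieve) to obtain a power-saving estimate of the shape $|S(\chi)| \ll x q^{-\delta(\epsilon)}$.

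Combining and substituting the prime number theorem asymptotic $|P_\eta| = (1+o(1))\, \eta q / \bigl((\tfrac{3}{4}+\epsilon)\log q\bigr)$ into $|P_\eta^{(2)}| \geq |P_\eta|^4/E$ yields the claimed bound $|P_\eta^{(2)}| \geq \bigl(2\epsilon/(3+4\epsilon)\bigr)^{2}(1+o(1)) q$; the denominator $(3+4\epsilon)^{2} = \bigl(4(\tfrac{3}{4}+\epsilon)\bigr)^{2}$ is a direct artifact of squaring the logarithmic prefactor $(\tfrac{3}{4}+\epsilon)^{-1}$ in $|P_\eta|$. The main obstacle is the quantitative character-sum step: qualitatively, any positive power saving $\delta > 0$ already yields $|P_\eta^{(2)}| \gg_\epsilon q$, but recovering the sharp constant $(2\epsilon/(3+4\epsilon))^{2}$ requires careful bookkeeping of the leading constants throughout the Type~I and Type~II bilinear bounds, a task that is especially delicate when $\epsilon$ is small and one approaches the Vinogradov threshold.
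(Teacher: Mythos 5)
The Cauchy--Schwarz reduction to the multiplicative energy and the character-sum expansion are fine, but the analytic input you invoke does not exist. A power-saving bound $\sum_{p\leqslant x}\chi(p)\ll xq^{-\delta}$ for a non-principal character $\chi$ modulo $q$ with $x=q^{3/4+\epsilon}\leqslant q$ is a well-known open problem, not a consequence of Vaughan's or Heath-Brown's identity. The reason the combinatorial decompositions fail here is structural: for a multiplicative character the Type~II bilinear sums factor completely, $\sum_{m,n}a_mb_n\chi(mn)=\bigl(\sum_m a_m\chi(m)\bigr)\bigl(\sum_n b_n\chi(n)\bigr)$, so the bilinear structure extracts no cancellation, in contrast to the additive-character setting where the Vinogradov threshold you cite actually applies. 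The best unconditional bound for $\sum_{p<x}\chi(p)$ via zero-free regions carries a $\sqrt{q}$ factor and is worse than trivial for $x\leqslant q$; the paper devotes a page of Section~4 to explaining exactly this obstruction, and Lemma~\ref{coset proposition} exists precisely because no equidistribution of $P_\eta$ in cosets can be proved. A secondary problem: even granting your hypothetical estimate, the arithmetic does not deliver the stated constant. You need $\delta>\frac{1}{8}-\frac{\epsilon}{2}$ before the off-principal contribution $T^2|P_\eta|$ is even $O(q^{-1}|P_\eta|^4)$; below that threshold your bound on $|P_\eta^{(2)}|$ falls short of $\gg q$ entirely, and above it the principal term dominates and you would conclude $|P_\eta^{(2)}|\geqslant(1-o(1))q$, i.e.\ full density. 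The constant $(\frac{2\epsilon}{3+4\epsilon})^2$ cannot arise from squaring the prime-counting prefactor as you suggest.

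The paper's proof is designed to sidestep the character sum over primes altogether. It majorises the indicator of the primes by an upper-bound sieve weight $w^+$ (Proposition~\ref{upper bound sieve weights}), bounds the number of representations of each fixed residue $a$ by $\sum_n w^+(n)w^+(an^\ast)$, and evaluates this by \emph{additive} Fourier inversion: the off-diagonal terms become Kloosterman sums, controlled by the Weil bound $|\operatorname{Kl_2}(r,sa;q)|\leqslant 2\sqrt q$, while the $L^1$-norm of $\widehat{w^+}$ is small because the sieve weight is a short linear combination of arithmetic progressions. The price of replacing primes by the sieve majorant is the factor $\xi^2$ in the density, which is exactly where the constant $(\frac{2\epsilon}{3+4\epsilon})^2$ comes from. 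If you want to salvage an energy-based argument, you must likewise replace the primes by sieve weights before expanding in characters (this is the ``fourth moment'' route mentioned at the start of Section~\ref{Proofs of first two main theorems}, using Lemma~\ref{Fourier coefficient prop} rather than cancellation over primes), but that yields a noticeably worse constant.
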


The proof of this theorem employs sieve weights to upper-bound the number of solutions to $p_1p_2\equiv a(\operatorname{mod} q)$ for a fixed $a$, from which we conclude that the support of $P_\eta^{(2)} $ cannot be too small. The key feature, which we believe to be relatively novel, is that while we lose information by switching to sieve weights we also gain by accessing stronger $L_1$ bounds of their Fourier transform.

We establish another partial result by solving the modular hyperbola problem in almost-primes.

\begin{Theorem}
\label{almost prime theorem}
Let $\epsilon>0$.
\begin{enumerate}
\item For large enough $q$, every non-zero residue modulo $q$ can be expressed as the product of at most $6$ primes less than $q$. In fact, there exists $q_0(\epsilon)$ such that, for $q\geqslant q_0(\epsilon)$, every non-zero residue modulo $q$ can be expressed as the product of at most $6$ primes less than $q^{\frac{15}{16}+\epsilon}$. 
\item There exists $q_0(\epsilon)$ and $k(\epsilon)\in \mathbb{N}$ such that, for $q\geqslant q_0(\epsilon)$, every non-zero residue modulo $q$ can be expressed as the product of at most $k(\epsilon)$ primes less than $q^{\frac{3}{4}+\epsilon}$. 
\end{enumerate}
\end{Theorem}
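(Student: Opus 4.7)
My approach is a direct Fourier analysis on $\left(\mathbb{Z}/q\mathbb{Z}\right)^{\times}$ coupled with classical bounds on character sums over primes. For a parameter $N \leq q$ write $S(\chi) := \sum_{p < N} \chi(p)$ and let $r_k(a)$ denote the number of ordered tuples $(p_1,\dots,p_k)$ of primes below $N$ with $p_1 p_2 \cdots p_k \equiv a \pmod q$. Orthogonality of the characters of $\left(\mathbb{Z}/q\mathbb{Z}\right)^{\times}$ gives
\[
r_k(a) = \frac{1}{q-1} \sum_{\chi} S(\chi)^k \, \overline{\chi(a)} = \frac{\pi(N)^k}{q-1} + \frac{1}{q-1}\sum_{\chi \neq \chi_0} S(\chi)^k \, \overline{\chi(a)},
\]
so to ensure representability of every residue as a product of $k$ primes below $N$ it suffices to show that $\sum_{\chi \neq \chi_0} |S(\chi)|^k < \pi(N)^k$. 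By H\"older's inequality and the Parseval identity $\sum_\chi |S(\chi)|^2 = (q-1)\pi(N)$, this reduces to establishing
\[
M^{k-2} < \frac{\pi(N)^{k-1}}{q-1}, \qquad M := \max_{\chi \neq \chi_0} |S(\chi)|.
\]

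For part (ii) I would take $N = q^{3/4+\epsilon}$ and invoke the classical Vinogradov bound: for any $\epsilon > 0$ there exists $c = c(\epsilon) > 0$ such that $|S(\chi)| \ll N^{1-c}$ for any non-principal $\chi$ mod $q$ and any $N \geq q^{1/2+\epsilon}$. Such a bound follows from Vaughan's identity applied to $\Lambda(n)\chi(n)$, and the standard estimates for the resulting Type~I and Type~II bilinear sums. Inserted into the displayed inequality this gives the condition $c(k-2)(3/4+\epsilon) > 1/4-\epsilon$, which is satisfied once $k \geq k(\epsilon)$ is large enough. For part (i) the stronger statement with $N = q^{15/16+\epsilon}$ and $k = 6$ amounts to demanding $M \ll q^{59/64 + O(\epsilon)}$, i.e.\ a polynomial saving of only $q^{1/64 - O(\epsilon)}$ over the trivial bound. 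Since $N = q^{15/16 + \epsilon}$ sits comfortably inside the Vinogradov range, the same input applies with substantial room to spare, and the weaker statement allowing primes less than $q$ follows at once from $P_{q^{-1/16 + \epsilon}} \subseteq P_1$.

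The main obstacle is securing the character sum bound with the correct quantitative shape; once $M \ll N^{1-c}$ is in hand for the relevant range of $N$, the rest is essentially bookkeeping with exponents. For part (ii) any polynomial saving will do, though $k(\epsilon)$ must then grow linearly in $1/\epsilon$. For part (i) the exponent $15/16$ is calibrated precisely to whatever explicit polynomial saving the Vinogradov--Karatsuba machinery provides in the near-to-$q$ range, and this is what is ultimately responsible for the specific value $k=6$; one could presumably sharpen either the exponent or the constant with a more delicate character sum input, but the overall shape of the argument is dictated by this Fourier step.
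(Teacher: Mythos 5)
There is a genuine gap, and it sits exactly at the point you flag as ``the main obstacle'': the bound $|S(\chi)| \ll N^{1-c}$ for non-principal $\chi \pmod q$ and $q^{1/2+\epsilon}\leq N\leq q$ is not a known theorem; it is an open problem. Vinogradov's method (Vaughan's identity plus Type~I/II estimates) succeeds for the additive exponential sum $\sum_{p<N} e(\alpha p)$ and, by Karatsuba's work, for the multiplicative sum over \emph{shifted} primes $\sum_{p<N}\chi(p+b)$, but it fails for $\sum_{p<N}\chi(p)$ itself: the Type~II bilinear forms $\sum_m\sum_n a_m b_n \chi(mn)$ factor as $\bigl(\sum_m a_m\chi(m)\bigr)\bigl(\sum_n b_n\chi(n)\bigr)$ because $\chi$ is completely multiplicative, so no cancellation can be extracted from the bilinear structure. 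The only unconditional estimate available is of the shape $\sum_{p<x}\chi(p)\ll_A \sqrt{q}\,x(\log x)^{-A}$, which is worse than trivial for all $x\leq q$; any genuine power saving in this range would be a major advance on least-non-residue type problems. (On GRH your argument does go through, and indeed then gives $P_1^{(3)}=(\mathbb{Z}/q\mathbb{Z})^\times$ with $k=3$.)

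The paper circumvents precisely this obstruction, and this is why the theorem is stated for products of \emph{at most} $6$ primes rather than exactly $k$ primes. One replaces one copy of the prime indicator in the triple convolution by a lower-bound linear sieve weight $w^-(n)=\sum_{d\mid n}\lambda_d^-$ supported on $[\eta q]$ with $\lambda_d^-=0$ for $d>(\eta q)^{2\xi+\delta}$. Because $w^-$ is a short linear combination of arithmetic progressions, its non-trivial multiplicative Fourier coefficients can be bounded by Polya--Vinogradov: $|\widehat{w^-}(\chi)|\ll (\eta q)^{2\xi+\delta}q^{1/2}\log q$. One then bounds the error term in $(w^-\ast 1_\eta\ast 1_\eta)(a)$ by $\sup_{\chi\neq\chi_0}|\widehat{w^-}(\chi)|\cdot\sum_\chi|\widehat{1_\eta}(\chi)|^2$ and applies Parseval to the second factor --- structurally the same Fourier step as yours, but with the supremum taken over the one object whose Fourier coefficients can actually be controlled. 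The cost is that $w^-$ only detects numbers free of prime factors below $(\eta q)^\xi$, i.e.\ almost-primes with at most $\lfloor 1/\xi\rfloor$ prime factors; with $\eta=q^{-1/16+\epsilon}$ the exponent condition permits $\xi>1/5$, giving at most $4$ prime factors and hence products of at most $4+2=6$ primes below $q^{15/16+\epsilon}$, while for $\eta=q^{-1/4+\epsilon}$ one only gets some $\xi(\epsilon)>0$ and hence some finite $k(\epsilon)$. Your exponent bookkeeping in the final step is sound, but the character-sum input it rests on does not exist unconditionally, so the argument as proposed cannot be completed.
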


\noindent Note that $\frac{15}{16}= 0.9375$. This theorem improves a result of \cite{Sh13}, in which it is established that all such residues can be expressed as the product of at most $18$ primes less than $q^{0.997}$. 

Finally, we deduce that every residue may be expressed as the product of a fixed number of small primes.

\begin{Theorem}
\label{exact theorem}
Let $\epsilon>0$.
\begin{enumerate}
\item Take $\eta = q^{-\frac{1}{16}+\epsilon}$. Then there exists $q_0(\epsilon)$ such that for $q\geqslant q_0(\epsilon)$ we have $P_{\eta}^{(48)}=(\mathbb{Z}/q\mathbb{Z})^\times$.
\item Take $\eta = q^{-\frac{1}{4}+\epsilon}$. Then there exists $q_0(\epsilon)$ and $K(\epsilon)\in\mathbb{N}$ such that for $q\geqslant q_0(\epsilon)$ we have $P_{\eta}^{(K(\epsilon))}=(\mathbb{Z}/q\mathbb{Z})^\times$.
\end{enumerate}
\end{Theorem}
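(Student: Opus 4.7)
Both parts of Theorem \ref{exact theorem} can be obtained from the corresponding parts of Theorem \ref{almost prime theorem} by a padding argument: given a representation $a \equiv p_1 \cdots p_j \pmod{q}$ of length $j \leqslant N$ (with $N = 6$ in part (i) and $N = k(\epsilon)$ in part (ii)), one extends to length exactly $K$ by appending a further $K - j$ primes in $P_\eta$ whose product is $\equiv 1 \pmod{q}$. Defining
\[
L \;=\; \{\, m \in \mathbb{N} : 1 \in P_\eta^{(m)} \,\},
\]
the padding succeeds uniformly in $a$ provided $L \supseteq \{K-N, K-N+1, \ldots, K-1\}$. For part (i), with $K = 48$, $N = 6$ and $\eta q = q^{15/16 + \epsilon}$, this is the window $\{42, \ldots, 47\}$; for part (ii), with $K(\epsilon)$ suitably large, one asks for a corresponding window below $K(\epsilon)$.

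The set $L$ is closed under addition (concatenate two representations of $1$), so by the Sylvester--Frobenius coin-problem observation it is cofinite in $\mathbb{N}$ as soon as it contains two coprime integers, and covers the required window once those coprime integers are small enough. To populate $L$ I would apply Theorem \ref{almost prime theorem} in several guises: to the element $1$, which yields some $j_0 \leqslant N$ in $L$; and to pairs $a, a^{-1} \in (\mathbb{Z}/q\mathbb{Z})^{\times}$, which via concatenation yields $j_a + j_{a^{-1}} \in L$ for every such $a$. For part (i) one additionally has an abundance of elements $a$ expressible as exactly two primes in $P_\eta$, because the inclusion $P_{q^{-1/4+\epsilon}} \subseteq P_{q^{-1/16+\epsilon}}$ lets us inherit positive density of $P_\eta^{(2)}$ from Theorem \ref{positive density theorem}. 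Letting $a$ range over such elements, one produces two coprime (indeed nearly consecutive) elements of $L$, after which the window is secured.

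The main obstacle is the coprime-lengths step: one must rule out the pathology in which every achievable value of $j_a$ (and hence of $j_a + j_{a^{-1}}$) happens to share a common factor --- for example, the scenario where every such length is even, which would correspond to the almost-prime representations being trapped inside the index-$2$ subgroup of quadratic residues modulo $q$. This is precisely the sort of coset obstruction that Lemma \ref{coset proposition} is designed to eliminate, as flagged by the introductory footnote; once the requisite parity (or more general congruence) flexibility is in hand, the padding argument closes routinely and yields the exact-length statements claimed.
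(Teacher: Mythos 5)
Your padding/numerical-semigroup strategy is genuinely different from the paper's argument, which instead extracts from the proof of Theorem \ref{almost prime theorem} that $\lvert P_\eta^{(6)}\rvert \geqslant \frac{1}{4}\lvert G\rvert$ and then applies Lemma \ref{Freiman} (supplemented by the Ruzsa triangle inequality in the case $A\cdot A^{-1}=G$) twice to force $P_\eta^{(6)}$ to double up to all of $G$, giving $48=6\times 8$. Your route is viable in principle, but as written it has two gaps, one fixable and one that defeats the stated constant in part (i).

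The fixable gap is the coprime-lengths step, which you flag but do not carry out. It does follow from Lemma \ref{coset proposition}, but via an argument you need to supply: let $N$ be the bound from Theorem \ref{almost prime theorem} and suppose $d=\gcd\bigl(L\cap[1,3N]\bigr)>1$. Then the map $f(a)= m \bmod d$, for any $m\leqslant N$ with $a\in P_\eta^{(m)}$, is well defined (if $a\in P_\eta^{(m)}\cap P_\eta^{(m')}$ with $m,m'\leqslant N$, pick $m''\leqslant N$ with $a^{-1}\in P_\eta^{(m'')}$; then $m+m''$ and $m'+m''$ lie in $L\cap[1,2N]$, so $d\mid m-m'$) and is a surjective homomorphism onto $\mathbb{Z}/d\mathbb{Z}$ sending every $p\in P_\eta$ to $1$; hence $P_\eta$ lies in a coset of the proper subgroup $\ker f$, contradicting Lemma \ref{coset proposition}. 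Note that one cannot conclude $\gcd(L)=1$ directly from "$L$ is a semigroup'' without some such structural input, since an arbitrary additively closed set can have gcd $>1$.

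The more serious problem is quantitative. The argument above only guarantees coprime elements of $L$ somewhere in $[1,3N]=[1,18]$, and your two-prime refinement only places elements in $\{3,\dots,8\}$ without any control over \emph{which} values occur (all the lengths $j_{a^{-1}}$ could a priori equal $6$, say); the claim that one gets "nearly consecutive'' elements of $L$ is unsubstantiated. Consequently the Frobenius bound you can actually certify is governed by the worst pair of coprime generators in $[1,18]$ (e.g. $\{17,18\}$ has Frobenius number $271$), so the window $\{42,\dots,47\}$ is not secured and the method yields some $K=K(\epsilon)$ much larger than $48$. This is adequate for part (ii), where any $K(\epsilon)$ is acceptable, but it does not prove part (i) as stated; to reach $48$ you would need either the paper's doubling argument or a genuinely sharper localisation of the small elements of $L$.
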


\noindent The second part of Theorems \ref{almost prime theorem} and \ref{exact theorem} may be viewed as multiplicative analogies of the classical Schnirelmann's theorem that every sufficiently large integer is the sum of at most 37000 primes, proved in \cite{Sch33} (see exposition in \cite{Na96}). There is analogy too between the methods of proof: we use Theorem \ref{positive density theorem} to establish a positive density result, and then an argument from additive combinatorics to show that this dense set expands. 

Consider $q=5$: we see $P_1=\{2,3\}$ consists entirely of quadratic non-residues, and $P_1^{(2)}=\{1,4\}$, $P_1^{(3)}=\{2,3\}$, $P_1^{(4)}=\{1,4\}$ etcetera, and so Theorem \ref{exact theorem} fails to hold. The obstruction arises as $P$ is entirely contained within a coset of a non-trivial subgroup $H\leqslant \left(\mathbb{Z}/p\mathbb{Z}\right)^{\times}$, or equivalently has a non-trivial Fourier coefficient of maximal value. In Lemma \ref{coset proposition} we establish that, for large enough $q$, the primes less than $q^{\frac{1}{4}+\epsilon}$ cannot be trapped in such a coset. Unfortunately we have not been able to improve upon the weak indirect argument used there, and hence have not been able to show any genuine cancellation in the Fourier coefficients. In Section \ref{Proof of Theorem 3} we discuss why finding an improved result may be difficult. \\

We end this introduction by surveying other partial results towards Erd\H{o}s' conjecture, in addition to \cite{Sh13}. The original paper \cite{E-O-S87} shows that, under the Generalised Riemann Hypothesis, there are at most $c\log^5 q$ residues $a<q$ that may not be expressed as the product of two primes less than $q$. The authors of \cite{FKS08} average over $q$ in a certain range, establishing unconditionally a similar result for almost-all $q$. Corollary 1 of a preprint\footnote{This preprint has been withdrawn, owing to an error in a different part of the authors' argument. However, the cited result remains sound -- it is available at the referenced url.} of Heath-Brown and Li \cite{HB-Li15} implies unconditionally that for almost-all $q$ we have $P_\eta^{(3)}=(\mathbb{Z}/q\mathbb{Z})^\times$, with $\eta=q^{-\frac{1}{2}+\epsilon}$. 

\section{Facts from sieve theory}
The proofs of Theorems \ref{positive density theorem} and \ref{almost prime theorem} will be applications of certain sieve weights. In this section we collect together the precise results required, and discuss suitable references. 
\begin{Proposition}[Upper-bound sieve]
\label{upper bound sieve weights}
Let $\gamma>0$, $\xi$ be a fixed real satisfying $0<\xi<\frac{1}{2}-\frac{\gamma}{2}$, and $x$ be a large integer, i.e. $x\geqslant x_0(\gamma)$ for some $x_0(\gamma)$. Let $z=x^\xi$ and $D=x^{2\xi}$. We denote by $\nu(d)$ the number of distinct prime factors of $d$. Then there exists a weight-function $w^+:[x]\longrightarrow \mathbb{R}_{\geqslant 0}$ such that:
\begin{enumerate}
\item if $n$ has no prime factors less than $z$ then $w^+(n)\geqslant 1$
\item $\sum\limits_{n=1}^{x}w^+(n)\leqslant (1+o_{x\rightarrow\infty}(1))\frac{x}{\xi\log x}$
\item $w^+(n)=\sum\limits_{d\vert n}\lambda^+_d$, where $(\lambda^+_d)_{d\geqslant 1}$ is a sequence of reals satisfying $\lambda^+_d = 0$ for $d>D$, $\lambda^+_d = 0$ for $d$ not square-free, and $\vert \lambda^+_d\rvert \leqslant 3^{\nu(d)}$.  
\end{enumerate}
\end{Proposition}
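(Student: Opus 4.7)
The plan is to take $w^+$ to be the classical Selberg upper bound sieve of level $D=x^{2\xi}$, sifting by the primes below $z=x^\xi$. Explicitly, let $P(z)=\prod_{p<z}p$, choose real parameters $(\rho_d)_{d\geqslant 1}$ supported on squarefree divisors of $P(z)$ of size at most $\sqrt{D}$ with $\rho_1=1$, and set
$$w^+(n):=\Bigl(\sum_{d\mid (n,P(z))}\rho_d\Bigr)^{\!2}.$$
Expanding the square writes $w^+(n)=\sum_{d\mid n}\lambda^+_d$ with $\lambda^+_d=\sum_{[d_1,d_2]=d}\rho_{d_1}\rho_{d_2}$, so $w^+$ is automatically non-negative. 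Property (i) is immediate: if $(n,P(z))=1$ the inner sum collapses to $\rho_1=1$, so $w^+(n)=1$. Property (iii) is also built in: $\lambda^+_d$ inherits the support condition ($\lambda^+_d=0$ unless $d$ is squarefree and $d\leqslant D$) from the support of $\rho_d$, and with the standard Selberg choice one has $|\rho_d|\leqslant 1$, so
$$|\lambda^+_d|\leqslant \sum_{[d_1,d_2]=d}1=3^{\nu(d)},$$
since for each prime $p\mid d$ we decide whether $p\mid d_1$ only, $p\mid d_2$ only, or both.

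The substantive part is property (ii). Here I would make the Selberg-optimal choice $\rho_d=\mu(d)\,G_d(z,\sqrt{D})/G(z,\sqrt{D})$, where $G(z,\sqrt{D})=\sum_{e\leqslant\sqrt{D},\,e\mid P(z)}\mu^2(e)/\phi(e)$ and $G_d$ is the analogous sum restricted to $e$ coprime to $d$. This yields the textbook upper bound
$$\sum_{n\leqslant x}w^+(n)\;\leqslant\;\frac{x}{G(z,\sqrt{D})}\;+\;R,\qquad R\ll \sum_{d_1,d_2\leqslant\sqrt{D}}|\rho_{d_1}\rho_{d_2}|\ll D\log^2 D,$$
since the level-of-distribution error for the identity $\lfloor x/d\rfloor=x/d+O(1)$ contributes only $O(D\log^2 D)$. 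The constraint $\xi<\tfrac12-\tfrac\gamma2$ ensures $D=x^{2\xi}=o(x/\log^2 x)$ with room to spare, so $R=o(x/\log x)$. For the main term, Mertens' theorem gives $G(z,\sqrt{D})\geqslant\sum_{d<z,\,d\mid P(z)}\mu^2(d)/\phi(d)=\log z+O(1)=(\xi+o(1))\log x$, producing the required bound $(1+o(1))x/(\xi\log x)$.

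The only subtlety I anticipate is pinning down the leading constant $1/\xi$ precisely (rather than the weaker $1/(2\xi)$ one might naively expect from $\log\sqrt{D}=\xi\log x$). This is the familiar $2e^{\gamma}/s$ behaviour of the one-dimensional Selberg/beta-sieve with $s=\log D/\log z=2$, balanced against Mertens' $e^{-\gamma}/\log z$; the two $e^{\pm\gamma}$ factors cancel to give the stated constant. Since this is purely sieve-theoretic and entirely standard, the cleanest route in the paper itself will be to invoke either Theorem 6.12 of Opera de Cribro (Friedlander--Iwaniec) or the corresponding fundamental-lemma statement in Iwaniec--Kowalski, rather than to reproduce the optimisation argument.
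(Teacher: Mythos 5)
Your proposal is correct and is exactly the construction the paper has in mind: the paper's proof simply cites the standard Selberg sieve (Theorem 7.1 of Friedlander--Iwaniec with $g(p)=\tfrac1p$), and your expansion of the square, the $3^{\nu(d)}$ count from $|\rho_d|\leqslant 1$, and the evaluation $G(z,\sqrt{D})=\log z+O(1)=(\xi+o(1))\log x$ (using $\sqrt{D}=z$, so $s=2$) supply precisely the details being invoked. The only cosmetic quibble is that the remainder is simply $\ll D$ rather than $D\log^2 D$, which changes nothing.
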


\begin{proof}
The standard Selberg sieve weights suffice, e.g. the construction giving Theorem 7.1 of \cite{F-I10} with $g(p)\equiv \frac{1}{p}$ for all primes, employing the asymptotic expression for $J(D)$ which begins page 118 of the same volume.
\end{proof}
\begin{Proposition}[Lower-bound sieve]
\label{lower bound sieve weights}
Let $\gamma,\delta>0$, $\xi$ be a fixed real satisfying $0<\xi<\frac{1}{2}-\frac{\gamma}{2}-\frac{\delta}{2}$, and $x$ be a large integer, i.e. $x\geqslant x_0(\gamma,\delta)$ for some $x_0(\gamma,\delta)$. Let $z=x^\xi$ and $D=x^{2\xi+\delta}$. Then there exists a weight-function $w^-:[x]\longrightarrow \mathbb{R}$ such that:
\begin{enumerate}

\item if $n$ has no prime factors less than $z$ then $w^-(n)\leqslant 1$
\item if $n$ has some prime factor that is less than $z$, then $w^-(n)\leqslant 0$
\item there exists a positive $c(\delta)$ such that $\sum\limits_{n=1}^{x}w^-(n)\geqslant (1+o_{x\rightarrow\infty}(1))c(\delta)\frac{x}{\xi\log x}$ 
\item $w^-(n)=\sum\limits_{d\vert n}\lambda^-_d$, where $(\lambda^-_d)_{d\geqslant 1}$ is a sequence of reals satisfying $\lambda^-_d = 0$ for $d>D$ and $\vert \lambda^-_d\rvert \leqslant 1$.  
\end{enumerate}
\end{Proposition}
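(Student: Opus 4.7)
The plan is to invoke the standard linear sieve (Rosser--Iwaniec $\beta$-sieve of dimension $1$), whose construction produces weights $\lambda_d^-$ satisfying the sifting minorant property
\begin{equation*}
\sum_{d \mid n} \lambda_d^- \leq \mathbf{1}_{\gcd(n, P(z)) = 1},
\end{equation*}
together with the support and size bounds stated in (iv). Properties (i) and (ii) of the Proposition are then immediate: in case (i) the indicator on the right-hand side equals $1$, and in case (ii) it equals $0$. The content is therefore concentrated in (iii).

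For (iii), I would reverse the order of summation to obtain $\sum_{n \leq x} w^-(n) = x \sum_{d \leq D} \lambda_d^-/d + O(D)$, and then invoke the fundamental asymptotic of the linear sieve. Setting $s = \log D / \log z = 2 + \delta/\xi$, this produces
\begin{equation*}
\sum_{d \leq D} \frac{\lambda_d^-}{d} \geq (1 + o(1))\, V(z)\, f(s),
\end{equation*}
where $V(z) = \prod_{p < z}(1 - 1/p) \sim e^{-\gamma_0}/\log z$ by Mertens (writing $\gamma_0$ for Euler's constant, so as not to clash with the $\gamma$ of the statement), and $f$ is the linear sieve lower function. The hypothesis $\delta > 0$ forces $s > 2$, which is precisely the threshold above which $f(s)$ is strictly positive; meanwhile $\xi < \tfrac{1}{2} - \tfrac{\gamma}{2} - \tfrac{\delta}{2}$ forces $2\xi + \delta < 1 - \gamma$, comfortably ensuring that the $O(D)$ error is negligible compared to the main term. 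Extracting $\log z = \xi \log x$ then yields the stated lower bound with an explicit positive $c(\delta)$, after crudely bounding $\xi < \tfrac{1}{2}$ to eliminate the residual $\xi$-dependence inside $f(2 + \delta/\xi)$.

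I anticipate no substantive obstacle here -- the argument is essentially an application of a black box, and the only real question is which reference to cite. Chapter~12 of Friedlander--Iwaniec \cite{F-I10} provides the linear sieve and its fundamental asymptotic in exactly the form required, with sifting density $g(p) = 1/p$. The only verifications needed are the (standard) one-dimensional linear sieve axioms for this density, and the fact that the trivial remainders $\lfloor x/d \rfloor - x/d = O(1)$ aggregate into the acceptable $O(D)$ error on summing over $d \leq D$. Note that one cannot simply mimic the Selberg $\Lambda^2$ construction used for Proposition~\ref{upper bound sieve weights}, since that method is intrinsically an upper-bound device; the Rosser--Iwaniec route is therefore essential.
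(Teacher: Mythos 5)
Your proposal is correct and follows essentially the same route as the paper: both apply the combinatorial (Rosser--Iwaniec) linear lower-bound sieve to $\mathcal{A}=[x]$ with $s=(2\xi+\delta)/\xi>2$ so that $f(s)>0$, observe that the individual remainders are $O(1)$ and hence aggregate to $O(D)=O(x^{1-\gamma})$, and extract $c(\delta)$ by bounding $s\geqslant 2+2\delta$ using $\xi<\tfrac12$. The paper simply cites equation (12.13) of Friedlander--Iwaniec for the optimised lower-bound sum rather than writing out the interchange of summation, but the content is identical.
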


\begin{proof}
The weights $\lambda^-_d$ are constructed by applying the optimal linear sieve to the sequence $\mathcal{A}=[x]$, with the required results proved in Chapter 11 of \cite{F-I10} and summarised at the beginning of Chapter 12. When sieving the integers $\mathcal{A}=[x]$ with any lower-bound combinatorial linear sieve $(\lambda^-_d)$, with sieving level $z=x^\xi$ and level of support $D= x^{2\xi +\delta}$, by construction the weight $w^-(n)=\sum\limits_{d\vert n}\lambda^-_d$ immediately satisfies parts (i), (ii) and (iv) of the above theorem. To establish part (iii), we note that the right-hand-side of equation (12.13) of \cite{F-I10} is exactly an estimation of the quantity $\sum\limits_{n=1}^{x}w^-(n)$ with optimised weights. In those authors' notation we have $s=\frac{2\xi +\delta}{\xi}$, which is at least $2+2\delta$: therefore $f(s)>0$ and the main term of (12.13) is of the order required in part (iii). Since $\mathcal{A}=[x]$ the errors $\lvert r_d (\mathcal{A})\rvert$ are $O(1)$, and since $D\leqslant x^{1-\gamma}$ the error $R(\mathcal{A},D)$ is negligible compared to the main term -- the proposition is proved. Another useful reference for the linear sieve is Chapter 8 of \cite{Ha-Ri74}, in which Theorem 8.4 may also be used for this proof. 
\end{proof}

In the sequel we shall only use the properties of these weights stated in Propositions \ref{upper bound sieve weights} and \ref{lower bound sieve weights}. Once $x$ is fixed, we shall freely consider these weights as functions on $\mathbb{N}$, supported on $[x]$. \\

To finish this section, let us develop two results on the Fourier theory of sieve weights. We recall the usual definitions, if only to fix normalisations. For an arbitrary function $f:\mathbb{Z}/q\mathbb{Z}\longrightarrow \mathbb{C}$ and $r\in \mathbb{Z}/q\mathbb{Z}$, identifying $[q]$ and $\mathbb{Z}/q\mathbb{Z}$ in a harmless manner, we define the additive Fourier coefficient $$\widehat{f}(r)=\sum\limits_{a\in \mathbb{Z}/q\mathbb{Z}} f(a)e\left(-\frac{ra}{q}\right).$$ Taking $\chi:(\mathbb{Z}/q\mathbb{Z})^\times \longrightarrow \mathbb{C}$ to be a multiplicative character, we define the multiplicative Fourier coefficient $$\widehat{f}(\chi)=\sum\limits_{x\in (\mathbb{Z}/q\mathbb{Z})^\times}f(x)\overline{\chi(x)}.$$ In section \ref{Proofs of first two main theorems} we will also need the usual notion of multiplicative convolution. For two functions $f,g:(\mathbb{Z}/q\mathbb{Z})^\times \longrightarrow \mathbb{C}$ we define their convolution $f\ast g: (\mathbb{Z}/q\mathbb{Z})^\times \longrightarrow \mathbb{C}$ to be the function $$(f\ast g)(a)=\sum\limits_{\substack{x,y\in (\mathbb{Z}/q\mathbb{Z})^\times \\ xy=a}} f(x)\overline{g(y)}.$$\\

Sieve weights, being weighted sums of arithmetic progressions, enjoy cancellation in their non-trivial Fourier coefficients, both additive and multiplicative. The following two lemmas formalise this notion; they hold for the weights coming from either of the two previous propositions but, for ease of application, we state them only for the weights to which they will be applied. 

\begin{Lemma}
Let $w^+$ be as in Proposition \ref{upper bound sieve weights}, with $x< q$. Then 
\begin{equation}
\label{l1 hat norm bound}
\sum\limits_{r=1}^{q-1} \vert \widehat{w^+}(r)\vert \ll qx^{2\xi+o(1)}\log q
\end{equation}
\end{Lemma}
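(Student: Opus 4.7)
The plan is to unfold the sieve convolution $w^+(n) = \sum_{d \mid n} \lambda_d^+$, use the triangle inequality, and reduce everything to a standard inner-geometric-sum/modular-distance estimate. Specifically, since $\lambda_d^+$ is supported on $d \leqslant D = x^{2\xi}$,
\begin{equation*}
\widehat{w^+}(r) \;=\; \sum_{n \leqslant x} w^+(n)\, e\!\left(-\tfrac{rn}{q}\right) \;=\; \sum_{d \leqslant D} \lambda_d^+ \sum_{m \leqslant x/d} e\!\left(-\tfrac{rdm}{q}\right) \;=:\; \sum_{d \leqslant D} \lambda_d^+\, G_d(r).
\end{equation*}
Each $G_d(r)$ is a truncated geometric series, so the standard bound $|G_d(r)| \leqslant \min\!\left(x/d,\ 1/(2\|rd/q\|)\right)$ holds, where $\|\cdot\|$ denotes distance to the nearest integer.

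Next I would swap summation orders and apply $|\lambda_d^+| \leqslant 3^{\nu(d)}$:
\begin{equation*}
\sum_{r=1}^{q-1} |\widehat{w^+}(r)| \;\leqslant\; \sum_{d \leqslant D} 3^{\nu(d)} \sum_{r=1}^{q-1} \min\!\left(\tfrac{x}{d},\ \tfrac{1}{2\|rd/q\|}\right).
\end{equation*}
Since $q$ is prime and $d \leqslant D < q$, multiplication by $d$ permutes the nonzero residues mod $q$, so the inner sum equals $\sum_{s=1}^{q-1} \min(x/d,\ 1/(2\|s/q\|))$. Splitting this at the threshold $s \asymp qd/x$ and using $\|s/q\| = s/q$ for $1 \leqslant s \leqslant q/2$ (with the symmetric range on the other side), a routine dyadic or logarithmic estimate gives $\ll q \log q$, uniformly in $d$.

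Finally I would insert the classical divisor estimate $\sum_{d \leqslant D} 3^{\nu(d)} \ll D (\log D)^{2} \ll x^{2\xi + o(1)}$ (following, for instance, from $\sum_{n \leqslant D} 3^{\nu(n)} n^{-s}$ being dominated by $\zeta(s)^3$ near $s=1$, or by a direct Rankin-style bound). Combining these ingredients produces $\sum_{r=1}^{q-1} |\widehat{w^+}(r)| \ll q (\log q) \cdot x^{2\xi+o(1)}$, which is the desired estimate.

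There is no substantive obstacle — the argument is a straightforward application of the convolution structure together with the geometric sum / completion-of-sums machinery. The only mild care-point is the divisor-type bound on $\sum_{d \leqslant D} 3^{\nu(d)}$, which is absorbed cleanly by the $x^{o(1)}$ factor; slight sharpening (replacing $x^{o(1)}$ by $(\log x)^2$) is available but unnecessary for the statement.
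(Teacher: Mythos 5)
Your proposal is correct and follows essentially the same route as the paper: unfold the sieve convolution, swap the sums over $d$ and $n$, bound the inner geometric sum by the modular-distance estimate, and exploit the fact that multiplication by $d$ permutes the nonzero residues modulo the prime $q$ to get $\ll q\log q$ uniformly in $d$. The only cosmetic difference is that you control $\sum_{d\leqslant D}3^{\nu(d)}$ on average via the divisor bound $D(\log D)^2$, whereas the paper uses the pointwise bound $3^{\nu(d)}\ll d^{o(1)}$; both are absorbed into the $x^{o(1)}$ factor.
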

\begin{proof}
The left-hand-side of (\ref{l1 hat norm bound}) may be written explicitly as $$\sum\limits_{r=1}^{q-1}\left\vert \sum\limits_{n=1}^{x}\sum\limits_{d\vert n}\lambda^+_d e\left(-\frac{rn}{q}\right) \right\vert. $$ Swapping the summation over $d$ and $n$, and using the pointwise bound $3^{\nu(d)}\ll d^{o(1)}$, the above expression is at most 

\begin{equation}
\label{after sum swap}
 x^{o(1)}\sum\limits_{r=1}^{q-1}\sum\limits_{d\leqslant x^{2\xi}}\left\vert\sum\limits_{y\leqslant \frac{x}{d}}e\left(-\frac{rdy}{q}\right)\right\vert.
\end{equation}
 
We denote the inner sum by $S$. By the standard estimate $$\left\vert \sum\limits_{x \leqslant X}e\left(\frac{ax}{q}\right)\right\vert\ll \operatorname{max}\left(\frac{q}{a},\frac{q}{q-a}\right)$$ for any $a$ in the range $1\leqslant a\leqslant q-1$, we conclude that   

\begin{equation}
\vert S\vert\ll \operatorname{max}\left(\frac{q}{rd \operatorname{mod} q},\frac{q}{q - rd \operatorname{mod} q}\right)
\end{equation}

\noindent where $rd \operatorname{mod} q$ is the least positive residue congruent to $rd$, noting that $rd$ is not a multiple of $q$. Substituting this bound into (\ref{after sum swap}) yields  $$\sum\limits_{r=1}^{q-1} \vert \widehat{w^+}(r)\vert \ll x^{o(1)}\sum\limits_{r=1}^{q-1}\sum\limits_{d\leqslant x^{2\xi}}\operatorname{max}\left(\frac{q}{rd \operatorname{mod} q},\frac{q}{q - rd \operatorname{mod} q}\right).$$

Swapping the sums over $r$ and $d$, we see that for each fixed $d$ the value $rd \operatorname{mod} q$ achieves each value from $1$ to $q-1$ exactly once. Splitting the sum into those $r$ for which $rd \operatorname{mod} q$ is less than $\frac{q}{2}$, and those for which $rd \operatorname{mod} q$ is greater than $\frac{q}{2}$, we obtain the lemma.
\end{proof}

We now use the Polya-Vinogradov theorem to bound the non-trivial multiplicative Fourier coefficients of sieve weights with small-support. This short argument was suggested to us by Adam Harper. 

\begin{Lemma}
\label{Fourier coefficient prop}
Let $w^-$ be the weight from Proposition \ref{lower bound sieve weights}, and $x\leqslant q$. Then for every non-trivial character $\chi$ we have the bound $$\vert\widehat{w^-}(\chi)\vert\ll x^{2\xi+\delta}q^{\frac{1}{2}}\log q$$
\end{Lemma}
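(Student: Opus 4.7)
The plan is a direct expansion followed by Polya--Vinogradov applied to the inner character sum. Using property (iv) of Proposition \ref{lower bound sieve weights} to write $w^-(n) = \sum_{d \mid n} \lambda^-_d$, and swapping the order of summation (substituting $n = dm$), I obtain
$$\widehat{w^-}(\chi) = \sum_{n \leqslant x} w^-(n) \overline{\chi(n)} = \sum_{d \leqslant D} \lambda^-_d \,\overline{\chi(d)} \sum_{m \leqslant x/d} \overline{\chi(m)},$$
where I have used complete multiplicativity of $\chi$ (extended by zero on non-units, which is harmless since for $d$ or $m$ sharing a factor with $q$ the corresponding term vanishes). Note that since $q$ is prime and $D = x^{2\xi + \delta} \leqslant x^{1-\gamma} < q$, the condition $\gcd(d,q) = 1$ is automatic, though we do not even need to exploit this.

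The inner sum is a character sum over an interval, and since $\chi$ (and hence $\overline{\chi}$) is non-trivial, the Polya--Vinogradov inequality gives
$$\left| \sum_{m \leqslant Y} \overline{\chi(m)} \right| \ll q^{\frac{1}{2}} \log q$$
uniformly in $Y$. Substituting this in, and using $|\lambda^-_d| \leqslant 1$ together with $|\overline{\chi(d)}| \leqslant 1$ from property (iv), I obtain
$$\vert \widehat{w^-}(\chi) \vert \ll q^{\frac{1}{2}} \log q \cdot \sum_{d \leqslant D} 1 \ll D \cdot q^{\frac{1}{2}} \log q = x^{2\xi + \delta} q^{\frac{1}{2}} \log q,$$
which is the claimed bound.

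There is really no obstacle here: the whole point of writing sieve weights in the form $\sum_{d \mid n} \lambda_d$ with $\lambda_d$ supported on $d \leqslant D$ is precisely so that one can reduce Fourier estimates to character sums over intervals of length $x/d$, to which Polya--Vinogradov (or indeed Burgess, if one wanted a sharper bound at the cost of complicating the statement) applies. The simplicity comes from the fact that $|\lambda^-_d| \leqslant 1$ already, so no divisor-function-type losses appear and no averaging over $d$ is required; this is exactly the feature highlighted in the remark following Theorem \ref{positive density theorem} about accessing stronger $L_1$-type bounds on Fourier transforms of sieve weights.
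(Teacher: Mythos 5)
Your proof is correct and is essentially identical to the paper's: both expand $w^-$ via property (iv), swap the order of summation, and apply Polya--Vinogradov to the resulting character sum over the progression $d\mid n$, using $\lvert\lambda^-_d\rvert\leqslant 1$ and the support condition $d\leqslant D=x^{2\xi+\delta}$ to conclude. No differences worth noting.
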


\begin{proof}
For $\chi$ a non-trivial character we have

\begin{align}
\vert\widehat{w^-}(\chi)\vert &= \lvert\sum\limits_{n\leqslant x}w^-(n)\overline{\chi}(n)\rvert \nonumber\\
&\leqslant \sum\limits_{d\leqslant x^{2\xi+\delta}}\vert \lambda_d \vert\lvert \sum\limits_{\substack{n\leqslant x\\d \vert n}}\overline{\chi}(n)\rvert\nonumber\\ &\leqslant x^{2\xi+\delta}q^{\frac{1}{2}}\log q\nonumber
\end{align}

\noindent with the final line following from the Polya-Vinogradov theorem (see Chapter 23 of \cite{Da00}).  
\end{proof}

\section{Proof of Theorems \ref{positive density theorem} and \ref{almost prime theorem}}
\label{Proofs of first two main theorems}
We will now use the results of the previous section, together with Weil's bound for Kloosterman sums, to prove the first two main theorems. The author originally presented a long intricate proof of Theorem \ref{positive density theorem}, in the case $\eta = 1$, in which a three-dimensional small sieve was applied to upper bound the multiplicative energy\footnote{The multiplicative energy is the number of solutions to $p_1p_2 \equiv p_3p_4 (\operatorname{mod} q)$ with $p_i<q$ for all $i$.} of the primes less than $q$. Kloosterman sum bounds were used to obtain the required level of distribution. The author observed that the energy bound was equivalent to bounding the fourth-moment $$\sum\limits_{\chi}\left\vert\sum\limits_{p<q}\chi(p)\right\vert^4$$ and, applying Lemma \ref{Fourier coefficient prop} with an upper-bound sieve weight, Adam Harper noted that this formulation admitted an easy proof. In particular, the Kloosterman sum estimation was no longer required. However, the constant $c(\frac{1}{4})$ in Theorem \ref{positive density theorem} given by this method is rather small -- around $\frac{1}{4000}$. To bring us closer to Erd\H{o}s' conjecture we return from the fourth-moment problem to the binary problem, this time using an additive Fourier analysis argument. 

\begin{proof}[Proof of Theorem \ref{positive density theorem}]
Let $\epsilon>0$ and without loss of generality also assume $\epsilon\leqslant \frac{1}{4}$. Take $\eta = q^{-\frac{1}{4}+\epsilon}$. We first show that, for any fixed $a\in (\mathbb{Z}/q\mathbb{Z})^\times$, the number of solutions to $p_1p_2\equiv a (\operatorname{mod} q)$ with $p_1,p_2\in P_\eta$ is at most $ (1+o(1))\frac{q^{\frac{1}{2}+2\epsilon}}{\xi^2 \log^2\eta q}$, with $\xi$ a suitable small constant to be chosen later. Indeed, note that the contribution when one of the primes is less than $q^{\frac{1}{2}}$ is negligible compared to the desired bound, so we may assume that $p_1,p_2 > q^{\frac{1}{2}}$. Now let $x=\eta q$ and take $w^+$ from Proposition \ref{upper bound sieve weights} (we will choose suitable $\xi$ later). By our above observation, and property (i) of Proposition \ref{upper bound sieve weights}, we may upper-bound the number of solutions by $$\sum\limits_{n=1}^{q-1}w^+(n)w^+(an^\ast)$$ where $n^\ast$ denotes the multiplicative inverse modulo $q$. By the additive Fourier inversion formula, this is equal to 

\begin{equation}
\label{ready for Kloosterman}
\frac{1}{q^2}\sum\limits_{r=1}^{q}\sum\limits_{s=1}^q \widehat{w^+}(r)\widehat{w^+}(s)\sum\limits_{n=1}^{q-1}e\left(\frac{rn+san^\ast}{q}\right)
\end{equation}

By property (ii) of Proposition \ref{upper bound sieve weights}, the contribution from the term where $r=s=0$ is at most $(1+o(1))\frac{\eta ^2q}{\xi^2\log^2\eta q}$. The remaining contribution is at most $T_1 +T_2+T_3$, where

\begin{equation}
\begin{cases}
T_1=\frac{1}{q^2}\sum\limits_{r=1}^{q-1}\sum\limits_{s=1}^{q-1}\vert \widehat{w^+}(r) \vert \vert \widehat{w^+}(s) \left\vert \sum\limits_{n=1}^{q-1}e\left(\frac{rn+san^\ast}{q}\right) \right\vert \\
T_2= (1+o(1))\frac{\eta}{q\xi^2\log \eta q} \sum\limits_{r=1}^{q-1}\vert \widehat{w^+}(r) \vert \left\vert\sum\limits_{n=1}^{q-1}e\left(\frac{rn}{q}\right) \right\vert \\
T_3 = (1+o(1))\frac{\eta}{q\xi^2\log \eta q}\sum\limits_{s=1}^{q-1}\vert \widehat{w^+}(s)\vert \left\vert \sum\limits_{n=1}^{q-1}e\left(\frac{san^\ast}{q}\right)\right\vert\nonumber
\end{cases}
\end{equation}

In $T_1$ the inner sum is the Kloosterman sum $\operatorname{Kl_2}(r,sa;q)$ -- see Chapter 11 of \cite{Iw-Ko04} -- which enjoys the Weil bound $$ \operatorname{Kl_2}(r,sa;q)\leqslant 2\sqrt{q}.$$ The other two exponential sums are trivially of size 1, and the sums of the Fourier coefficients of $w$ are precisely of the form estimated in Lemma 6. Hence we may conclude that

\begin{equation}
\begin{cases}
T_1\ll \eta^{4\xi}q^{4\xi+\frac{1}{2}+o(1)}\\
T_2\ll \frac{\eta^{1+2\xi}q^{2\xi+o(1)}}{\xi^2}\\
T_3\ll \frac{\eta^{1+2\xi}q^{2\xi+o(1)}}{\xi^2}\nonumber
\end{cases}
\end{equation}

Substituting $\eta=q^{-\frac{1}{4}+\epsilon}$, a short calculation demonstrates that the main term dominates as $q\rightarrow\infty$ provided that we fix $\xi<\frac{2\epsilon}{3+4\epsilon}$. Picking such a $\xi$, for which a $\gamma = \gamma(\epsilon)$ may be chosen to satisfy the hypotheses of Proposition \ref{upper bound sieve weights}, since $\frac{2\epsilon}{3+4\epsilon}<\frac{1}{2}$, we have concluded that 

\begin{equation}
\label{concentration bound}
\displaystyle\sum\limits_{\substack{p_1,p_2\in P_\eta\\ p_1p_2\equiv a (\operatorname{mod} q)}} 1 \leqslant (1+o(1))\frac{q^{\frac{1}{2}+2\epsilon}}{\xi^2 \log^2 \eta q}
\end{equation} as claimed. 

Now we sum (\ref{concentration bound}) over all $a$ in $P_\eta^{(2)}$. This yields $$ (1+o(1))\frac{\eta^2 q^2}{\log^2\eta q}\leqslant (1+o(1))\lvert P_\eta^{(2)}\rvert \frac{q^{\frac{1}{2}+2\epsilon}}{\xi^2 \log^2\eta q}.$$ Substituting in $\eta=q^{-\frac{1}{4}+\epsilon}$ and rearranging gives $$\lvert P_\eta^{(2)} \rvert \geqslant \xi^2 q(1+o(1))\geqslant \left(\frac{2\epsilon}{3+4\epsilon}\right)^2 q(1+o(1)),$$ by letting $\xi$ tend to $\frac{2\epsilon}{3+4\epsilon}$ from below suitably slowly as $q$ tends to infinity. This proves the theorem. 
\end{proof}

The proof of Theorem 2 is an easy consequence of a standard Fourier analysis argument, namely the use of triple convolutions.
\begin{proof}[Proof of Theorem 2]
We first prove part (i). Let $\epsilon>0$ be a small constant, and take $\eta=q^{-\frac{1}{16}+\epsilon}$, $x=\eta q$ and $w^-$ the weight from Proposition \ref{lower bound sieve weights} (we will choose appropriate $\delta$ and $\xi$ later). Finally define $1_\eta$ to be the indicator function of the set $P_\eta$. 

We proceed by showing that $(w^-\ast 1_\eta\ast 1_\eta)(a) >0$ for all $a\in (\mathbb{Z}/q\mathbb{Z})^\times$.  Indeed by multiplicative Fourier inversion we have the identity

\begin{align}
w^-\ast 1_\eta\ast 1_\eta(a)&=\frac{1}{q-1}\sum\limits_{\chi} \widehat{w^-}(\chi) \widehat{1_\eta}(\chi)^2 \chi(a)\notag\\
&= \frac{(1+o(1))c(\delta)\eta^3q^2}{\xi\log^3\eta q}+\frac{1}{q-1}\sum\limits_{\chi\neq \chi_0} \widehat{w^-}(\chi) \widehat{1_\eta}(\chi)^2\chi(a).
\end{align}
\noindent by Property (iii) of Proposition \ref{lower bound sieve weights}. If the claim $w^-\ast 1_\eta\ast 1_\eta (a) >0$ were false, then we would have $$\frac{1}{q-1}\sum\limits_{\chi\neq \chi_0} \vert\widehat{w^-}(\chi) \widehat{1_\eta}(\chi)^2\vert \geqslant (1+o(1))\frac{c(\delta)\eta^3q^2}{\xi\log^3\eta q}$$ and therefore $$\operatorname{sup}\limits_{\chi\neq\chi_0} \vert \widehat{w^-}(\chi)\rvert \sum\limits_{\chi}\lvert \widehat{1_\eta}(\chi)\rvert^2 \geqslant(1+o(1))\frac{c(\delta)\eta^3 q^3}{\xi\log^3\eta q}.$$ But by Parseval's identity this would imply that 

\begin{equation}
\label{the equation to be contradicted}
\operatorname{sup}\limits_{\chi\neq\chi_0} \vert \widehat{w^-}(\chi)\rvert\geqslant  (1+o(1))c(\delta)\frac{\eta^2 q}{\xi\log ^2 \eta q}.
\end{equation}

\noindent From Lemma \ref{Fourier coefficient prop}, we have $$\operatorname{sup}\limits_{\chi\neq\chi_0} \vert \widehat{w^-}(\chi)\vert\ll (\eta q)^{2\xi+\delta}q^{\frac{1}{2}}\log q.$$ A short calculation shows that this contradicts (\ref{the equation to be contradicted}), provided that 
\begin{equation}
\label{parts to theorem 2 diverge}
\xi < \frac{\frac{1}{4}+\log_q\eta}{1+\log_q\eta}-\frac{\delta}{2}
\end{equation}

For $\eta=q^{-\frac{1}{16}+\epsilon}$, the condition reads $$\xi < \frac{1}{5}+\frac{64\epsilon}{75+80\epsilon} - \frac{\delta}{2},$$ and hence, by picking $\delta>0$ small enough, there exists\footnote{Note that inequality (\ref{parts to theorem 2 diverge}) already implies that $\xi<\frac{1}{4}$, so $\gamma = \frac{1}{4}$ may be used in Proposition 5.} a permissible $\xi$ greater than $\frac{1}{5}$ satisfying the inequality (\ref{parts to theorem 2 diverge}). Picking such a $\xi$, and the contradiction obtained, we conclude that $(w^-\ast 1_\eta\ast 1_\eta)(a) >0$ for all $a\in (\mathbb{Z}/q\mathbb{Z})^\times$. But trivially we then have $\operatorname{max}(w^-,0)\ast 1_\eta\ast 1_\eta (a) >0$ as well. Recalling the statement of Proposition \ref{lower bound sieve weights}, we observe that $\operatorname{max}(w^-,0)$ is an arithmetic function supported on $[\eta q]$, and furthermore only supported on numbers all of whose prime factors are at least $(\eta q)^\xi$. Since $\xi> \frac{1}{5}$, we see that $\operatorname{max}(w^-,0)$ is only supported on numbers with at most 4 primes factors. Part (i) of Theorem \ref{almost prime theorem} is then immediate.\\

To prove part (ii) of the theorem, we take $\eta=q^{-\frac{1}{4}+\epsilon}$ and proceed identically until (\ref{parts to theorem 2 diverge}). By picking $\delta>0$ small enough in terms of $\epsilon$ we may ensure that the upper bound in (\ref{parts to theorem 2 diverge}) is positive, and so there is some $\xi$ satisfying (\ref{parts to theorem 2 diverge}) and some $k(\epsilon)\in \mathbb{N}$ such that $\xi>\frac{1}{k(\epsilon)-1}$. Then $\operatorname{max}(w^-,0)$ is supported on numbers with at most $k(\epsilon)-2$ prime factors, and part (ii) of the theorem immediately follows. 
\end{proof}

\section{Proof of Theorem \ref{exact theorem}}
\label{Proof of Theorem 3}
We adopt the more standard combinatorial notation $A\cdot A = A^{(2)}$. Consider first the following standard combinatorial lemma, which renders precise the notion that being contained in a coset is the only obstruction to a set having reasonable doubling. 

\begin{Lemma} 
\label{Freiman}
Let $(G,\cdot)$ be an abelian group, written multiplicatively, and let $A\subseteq G$. Suppose that $A$ is not contained in any proper coset of $G$. Then either $A\cdot A^{-1}=G$ or $\lvert A\cdot A\rvert \geqslant \frac{3}{2} \lvert A \rvert$.
\end{Lemma}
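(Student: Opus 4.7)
The plan is to invoke Kneser's theorem on the product set $A\cdot A$. Writing $H = \operatorname{Stab}(A\cdot A) = \{g\in G : g\cdot (A\cdot A) = A\cdot A\}$ for the stabiliser subgroup, Kneser's theorem (applied in this multiplicative setting) yields
$$\vert A\cdot A\vert \geqslant 2\vert AH\vert - \vert H\vert,$$
where $AH$ denotes the union of those cosets of $H$ that meet $A$. From here the argument splits on where $H$ sits in the subgroup lattice of $G$.

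Suppose first that $H$ is a proper subgroup. Since $A$ is not contained in any coset of $H$, the integer $k := \vert AH\vert/\vert H\vert$ (the number of cosets of $H$ meeting $A$) satisfies $k\geqslant 2$. Combining $\vert A\vert \leqslant \vert AH\vert = k\vert H\vert$ with Kneser gives
$$\vert A\cdot A\vert \geqslant (2k-1)\vert H\vert = \Bigl(2 - \tfrac{1}{k}\Bigr)k\vert H\vert \geqslant \tfrac{3}{2}\,k\vert H\vert \geqslant \tfrac{3}{2}\vert A\vert,$$
using $2-1/k \geqslant 3/2$ for $k\geqslant 2$. This settles every case except $H=G$.

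If instead $H=G$, then $A\cdot A$ is invariant under the whole group, so $A\cdot A = G$. If $\vert A\vert \leqslant \tfrac{2}{3}\vert G\vert$ the bound $\vert A\cdot A\vert = \vert G\vert \geqslant \tfrac{3}{2}\vert A\vert$ is immediate. Otherwise $\vert A\vert > \vert G\vert/2$, and then for every $g\in G$ one has $\vert A\vert + \vert gA\vert > \vert G\vert$, forcing $A\cap gA\neq\emptyset$; translating back, $g\in A\cdot A^{-1}$ for every $g$, so $A\cdot A^{-1}=G$.

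I do not expect any genuine obstacle beyond stating Kneser's theorem in the correct form. The non-coset hypothesis on $A$ is used in exactly one place, namely to exclude the degenerate configuration $k=1$ with $H$ proper; the remainder is a short case split on the stabiliser $H$ and a pigeonhole argument when $A$ is large.
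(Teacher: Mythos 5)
Your proof is correct, but it takes a genuinely different route from the paper. The paper reproduces Freiman's elementary argument: assuming $\lvert A\cdot A\rvert < \frac{3}{2}\lvert A\rvert$, for any $w,x,y,z\in A$ the sets $\{a\in A: wa\in zA\}$ and $\{a\in A: xa\in yA\}$ each have size exceeding $\frac{1}{2}\lvert A\rvert$ (since $wA$ and $zA$ both sit inside $A\cdot A$), hence intersect, which shows $A\cdot A^{-1}$ is closed under multiplication and is therefore a subgroup; the non-coset hypothesis (applied to the coset $a_0(A\cdot A^{-1})\supseteq A$) then forces $A\cdot A^{-1}=G$. You instead invoke Kneser's theorem and split on the stabiliser $H$ of $A\cdot A$: your case analysis is sound, the bound $(2k-1)\lvert H\rvert\geqslant(2-\frac{1}{k})\lvert A\rvert\geqslant\frac{3}{2}\lvert A\rvert$ for $k\geqslant 2$ is right, and the pigeonhole step for $\lvert A\rvert>\frac{1}{2}\lvert G\rvert$ correctly yields $A\cdot A^{-1}=G$. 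What each approach buys: yours outsources the combinatorial core to a named black box and in exchange gives slightly finer structural information (the stabiliser controls exactly how doubling can fail), while the paper's argument is self-contained, needs no machinery beyond inclusion-exclusion, and is the version that iterates cleanly in the proof of Theorem 3. Both arguments use commutativity essentially and both implicitly require $A$ (and, for the conclusion $A\cdot A^{-1}=G$, the group $G$) to be finite, which is the only setting in which the lemma is applied.
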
 

\begin{proof}
Originally in \cite{Fr73}, in Russian, but the brevity of the argument allows us to repeat it here. Suppose that $\lvert A\cdot A\rvert < \frac{3}{2} \lvert A \rvert$; we show that $A\cdot A^{-1}$ is closed under multiplication. Indeed, let $w,x,y,z\in A$. The set $$\{a\in A: wa\in zA\}$$ has size greater than $\frac{1}{2}\vert A\rvert$ since $\lvert A\cdot A\rvert < \frac{3}{2} \lvert A \rvert$. Similarly $$ \lvert\{a\in A: xa\in yA\}\rvert > \frac{1}{2}\vert A\rvert.$$ Therefore these two sets intersect, and we have $a,a_z,a_y\in A$ such that $wa=za_z$ and $xa=ya_y$. Hence $$(wx^{-1})(yz^{-1})=waa^{-1}x^{-1}yz^{-1}= za_za_y^{-1}y^{-1}yz^{-1}=a_za_y^{-1}$$

Therefore $A\cdot A^{-1}$ is a subgroup of $G$ -- the other axioms are trivial -- and as $A$ is not contained in any proper coset of $G$ we conclude that $A\cdot A^{-1}$ must be the whole of $G$. 
\end{proof}

We now progress to showing that for $\eta=q^{-\frac{1}{4}+\epsilon}$ the set of primes $P_\eta$ is not contained in any proper coset of $(\mathbb{Z}/q\mathbb{Z})^\times$, allowing the application of the previous lemma. This is equivalent to proving that there is no non-principal character of $(\mathbb{Z}/q\mathbb{Z})^\times$ taking constant values on $P_\eta$. We shall rule out the existence of such pathological characters, in fact for even shorter ranges of primes.

\begin{Lemma}
\label{coset proposition}
Let $\epsilon>0$ and $\eta=q^{-\frac{3}{4}+\epsilon}$. Then there exists a constant $C(\epsilon)$ such that for all primes $q\geqslant C(\epsilon)$ there does not exist a proper subgroup $H\subseteq\left(\mathbb{Z}/q\mathbb{Z}\right)^{\times}$ and $x\in\left(\mathbb{Z}/q\mathbb{Z}\right)^{\times}$ with $P_\eta\subseteq xH$.  
\end{Lemma}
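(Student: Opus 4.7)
The plan is by contradiction. Suppose $P_\eta \subseteq xH$ for some proper subgroup $H \leq (\mathbb{Z}/q\mathbb{Z})^\times$ of index $d \geq 2$, and pick any non-principal character $\chi$ of $(\mathbb{Z}/q\mathbb{Z})^\times$ trivial on $H$. The hypothesis then says exactly that $\chi(p) = c := \chi(x)$ for every prime $p < y := \eta q = q^{1/4+\epsilon}$, with $c$ a root of unity. The crucial structural observation is that every integer $n \leq y$ is automatically $y$-smooth, so complete multiplicativity of $\chi$ promotes the hypothesis to the pointwise identity
\[
\chi(n) = c^{\Omega(n)} \qquad \text{for every } 1 \leq n \leq y,
\]
where $\Omega$ counts prime factors with multiplicity. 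In effect, the entire restriction of $\chi$ to $[1,y]$ is pinned down by the single constant $c$.

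The contradiction is then forced by estimating $\sum_{n \leq y} \chi(n)$ in two conflicting ways. Because $y$ sits above the Burgess threshold $q^{1/4}$, Burgess's inequality applied with $r$ taken sufficiently large (depending on $\epsilon$) yields a genuine power-saving upper bound
\[
\Bigl|\sum_{n \leq y}\chi(n)\Bigr| \ll y \cdot q^{-\delta(\epsilon)}
\]
for some $\delta(\epsilon) > 0$. On the other hand, by the identity above the same sum equals $\sum_{n \leq y} c^{\Omega(n)}$, and for any root of unity $c$ with $c \neq -1$ the Selberg--Delange method supplies an asymptotic of the form $y(\log y)^{c-1}/\Gamma(c)$, whose modulus is of order $y/(\log y)^{A}$ for some constant $A = A(c)$. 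For $q$ sufficiently large this lower bound dominates the Burgess upper bound, delivering the contradiction and ruling out every case with $c \neq -1$.

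The main obstacle, and I suspect the source of the author's description of the argument as ``weak and indirect'', is the residual case $c = -1$. This corresponds to $d = 2$, $\chi$ being the unique real non-principal character modulo $q$, and the hypothesis being that every prime $p < q^{1/4+\epsilon}$ is a quadratic non-residue. The Selberg--Delange main term degenerates because $\Gamma$ has a pole at $-1$, and $\sum_{n \leq y} \chi(n)$ merely reduces to the Liouville summatory function, which is $o(y)$ unconditionally and so compatible with Burgess; no direct contradiction is available. I would close this case by splitting into sub-cases. Vinogradov's classical bound, that the least quadratic non-residue is a prime of size $O(q^{1/(4\sqrt{e})+o(1)})$, rules out the sub-case in which every prime below $y$ is a quadratic residue (since then no small quadratic non-residue prime could exist at all). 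The complementary sub-case, in which every such prime is a non-residue, requires a dual bound on the least prime quadratic residue, obtained for instance via a Karatsuba-type sum-over-primes estimate or a suitable variant tailored to detect residues rather than non-residues.
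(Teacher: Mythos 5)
Your overall strategy is the paper's: from $P_\eta\subseteq xH$ extract a non-principal character $\chi$ with $\chi(p)=c$ on $P_\eta$, promote this to $\chi(n)=c^{\Omega(n)}$ on all of $[1,\eta q]$, and play a Selberg--Delange lower bound for $\sum_{n\leqslant \eta q}c^{\Omega(n)}$ against Burgess's upper bound for $\sum_{n\leqslant \eta q}\chi(n)$, with the quadratic character treated separately via least-prime-residue results (the paper cites Burgess for the non-residue and Vinogradov--Linnik/Pintz for the residue, which is exactly the ``dual bound'' you ask for; your suggested Karatsuba-type route via shifted primes is not how those results are proved, but the black box you need is a known theorem).

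There is, however, one genuine gap: uniformity in $c$. You write that the Selberg--Delange main term has modulus $\asymp y/(\log y)^{A}$ ``for some constant $A=A(c)$'' and that ``for $q$ sufficiently large'' this beats Burgess. But $c$ is a root of unity of order dividing $q-1$, so it varies with $q$ and can be within $O(1/q)$ of $-1$ without equalling $-1$. Near $-1$ the factor $1/\Gamma(c)$ tends to $0$, and the main term $y(\log y)^{\operatorname{Re}(c)-1}/\lvert\Gamma(c)\rvert$ is then swallowed by the $O\bigl(y(\log y)^{\operatorname{Re}(c)-2}\bigr)$ error in the Selberg--Delange asymptotic; no lower bound survives, and your threshold ``$q$ sufficiently large'' would depend on $c$, which depends on $q$. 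The same issue undercuts your assertion that $c=-1$ forces $d=2$: with your ``pick any non-principal $\chi$ trivial on $H$'' one can perfectly well land on $c=-1$ (or $c$ near $-1$) with $\chi$ of large order. The paper repairs both points at once with a short observation you are missing: after excluding the quadratic character, replace $\chi$ by a suitable power $\chi^{j}$ (still non-principal, still constant on $P_\eta$) so that the new constant satisfies $\operatorname{Re}(c)\geqslant -\tfrac{1}{2}$; on that arc $1/\Gamma(c)$ and the Euler-product factor are bounded below uniformly, and the contradiction with Burgess is then uniform in $q$. With that modification your argument matches the paper's proof.
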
 

\noindent Unfortunately our method proves no stronger result. In particular we cannot show that the primes $p$ less than $q$ enjoy any equidistribution in cosets of $(\mathbb{Z}/q\mathbb{Z})^\times$, and so we are restricted to using very general combinatorial arguments such as Lemma \ref{Freiman}, in lieu of Fourier analytic techniques.\\

Let us briefly discuss why proving equidistribution may be a genuinely difficult problem\footnote{Of course we cannot rule out that we have missed some easy argument.}. Proving equidistribution is equivalent to exhibiting some non-trivial upper bound for the character sum $\sum\limits_{p<\eta q}\chi(p)$, for which the standard method is to use zero-free regions of $L$-functions. For example, consider the result (\cite{Iw-Ko04} p. 124) 
\begin{equation}
\label{bound of sum over primes}
\sum\limits_{p<x}\chi(p)\ll_A \sqrt{q}x(\log x)^{-A}
\end{equation}
\noindent for any non-principal character modulo $q$. Regrettably this result is worse than trivial for our applications, as $x\leqslant q$, and the unfortunate $\sqrt{q}$ factor is very stubborn, coming from the $q$ dependence in the $L$-function zero-free region which has resisted improvement for 80 years\footnote{There is some work (some theorems from \cite{Ga06}, for example) showing that for sums over short intervals there are only a few exceptional conductors for which a better cancellation fails to hold, but this does not assist with the consideration of fixed conductor $q$.}. In Chapter 9 of \cite{Mo94} Montgomery gives bounds below which a character is surjective when restricted to primes -- a stronger conclusion than that of Lemma \ref{coset proposition} -- but this is conjectural on a larger zero-free region around $s=1$. Of course, conditional on GRH the left hand side of (\ref{bound of sum over primes}) enjoys almost square-root cancellation, with only logarithmic dependence on $q$. 

The difficulty arises as we are evaluating the sum of a multiplicative function along the primes: much better estimates have long been known to hold for short character sums over shifted primes $p+b$ (for any fixed $b$ coprime to the conductor $q$). In particular, from  \cite{Ka70} we have that 

\begin{equation}
\label{Karatsuba's result}
\sum\limits_{p<q^{\frac{1}{2}+\epsilon}}\chi(p+b)\ll_\epsilon q^{\frac{1}{2}+\epsilon-\delta}
\end{equation}

\noindent for some $\delta>0$ depending only on $\epsilon$. An easy Fourier argument\footnote{Sketch: proceed as in the proof of Theorem \ref{almost prime theorem}, but take all three functions equal to the indicator function of the set of shifted primes $P_\eta + b$. One gets a main term from the principal character, and then the error -- a third moment -- may be bounded using (\ref{Karatsuba's result}) and Parseval's identity. This method may also be used to prove the result of an earlier footnote, that $P_1^{(3)} = (\mathbb{Z}/q\mathbb{Z})^\times$ assuming GRH, using the character sum bound $\sum\limits_{p\leqslant x}\chi(p) \ll x^\frac{1}{2}(\log xq)^2$.} may then be used to establish Theorem \ref{exact theorem} part (ii) with $\eta=q^{-\frac{1}{2}+\epsilon}$, $k=3$, and $P_\eta$ replaced by a shift $P_\eta+b$.\\ 

An alternative approach to ruling out hypothetical conspiracies of characters at primes is to convert such behaviour into a conspiracy over an interval, obtaining a contradiction to the various known estimates for character sums over intervals. For example, using such an approach, one may show that there are prime quadratic non-residues and residues less than $q^{\frac{1}{4}+\epsilon}$. The non-residue case is due to Burgess \cite{Bu57}, who proved a slightly stronger result, and is an immediate application of his famous character sum bound; the residue case is due to Vinogradov and Linnik in \cite{V-L66}, although Pintz gave a much simpler proof in \cite{Pi77}. A natural generalisation of this method for $n^{th}$-power residues, undertaken by Elliott in \cite{El73}, shows that there are primes $p$ less than $q^{\frac{n-1}{4}+\epsilon}$ which are $n^{th}$-power residues modulo $q$ -- the equivalent statement for a non-residue follows immediately from Burgess. Collecting these results together, we see that for $\eta =1$ Lemma \ref{coset proposition} is already known in the case where $\chi$ has order 2, 3 or 4. \\

The limitation of these methods is that they do not seem to be very robust. Weakening the assumptions, we quickly lose control of $\chi$ over a positive density subset of that interval, and the method fails. \\

Having discussed at some length the problems surrounding Lemma \ref{coset proposition}, let us proceed with the proof.

\begin{proof}[Proof of Lemma \ref{coset proposition}]
Let $\eta=q^{-\frac{3}{4}+\epsilon}$ and suppose that $P_\eta$ is contained in some coset of a non-trivial subgroup of $(\mathbb{Z}/q\mathbb{Z})^\times$: equivalently, some Dirichlet character $\chi$ with conductor $q$ (necessarily primitive) is constant on $P_\eta$. We may preclude the case when $\chi$ is the quadratic character by the existing results mentioned above, so without loss of generality $\chi$ is complex. If $\chi(p)\equiv z$ on primes $p<\eta q$, for $z$ some root of unity, then $\chi(n)$ agrees with the function $z^{\Omega(n)}$ for all $n<\eta q$, where $\Omega(n)$ is the number of prime factors of $n$ counted with multiplicity. It will be important that $z$ is bounded uniformly away from $-1$; w.l.o.g. we may assume, by replacing $\chi$ with a suitable power of $\chi$, that $\text{Re}(z)\geqslant \text{Re}(e^{\frac{2\pi i}{3}})=-\frac{1}{2}$. 

We have Burgess' character sum estimate \cite{Bu62}, that for every $r\in\mathbb{N}$

\noindent \begin{equation}
\label{Burgessbound}
\sum\limits_{n\leqslant x}\chi(n)\ll x^{1-\frac{1}{r}}q^{\frac{r+1}{4r^2}} (\log q)^{\frac{1}{r}}.
\end{equation}

A proof of this formulation may be found in Chapter 12 of \cite{Iw-Ko04}. We obtain a contradiction by observing that the equivalent sum with $z^{\Omega(n)}$ does not enjoy the same cancellation. Indeed, Theorem 5.2 in \cite{Te95} shows that for any root of unity $z$ (apart from $z=-1$)

\begin{equation}
\label{tenenbaum}
\sum\limits_{n\leqslant x}z^{\Omega(n)}= x(\log x)^{z-1}\left(\dfrac{\prod\limits_{p}\left(1-\frac{z}{p}\right)^{-1}\left(1-\frac{1}{p}\right)^z}{\Gamma(z)}+O\left(\dfrac{1}{\log x}\right)\right)\nonumber
\end{equation}

\noindent with implied constant independent of $z$. The result is proved using the Selberg-Delange method, though is in essence originally due to Sathe \cite{Sa53}.

Since $\text{Re}(z)\geqslant -\frac{1}{2}$, $z$ lies on a segment of the unit circle on which $\Gamma(z)$ is uniformly bounded, and hence $\frac{1}{\Gamma(z)}$ is bounded away from zero. Further, as $\lvert z \rvert = 1$, the factor $\prod\limits_{p}\left(1-\frac{z}{p}\right)^{-1}\left(1-\frac{1}{p}\right)^z$ is bounded away from zero (immediately seen by taking logarithms). Therefore, for large enough $q$

\begin{equation}
\label{non-cancellation}
\sum\limits_{n\leqslant \eta q}z^{\Omega(n)}\gg \dfrac{\eta q}{\log^{\frac{3}{2}} (\eta q)}
\end{equation}

\noindent for all $z$ with $\lvert z\rvert=1$ and $\text{Re}(z)\geqslant -\frac{1}{2}$, uniformly. For large enough $q$ and $r$, depending on $\epsilon$, (\ref{Burgessbound}) and (\ref{non-cancellation}) immediately combine to obtain a contradiction when $x=\eta q$. This concludes the proof of Lemma \ref{coset proposition}.
\end{proof}

\begin{proof}[Proof of Theorem 3]
\noindent Let $G$ be a finite abelian group, written multiplicatively. Observe that if a set $A\subseteq G$ is not contained in any proper coset then neither is $A^{(k)}$ for all $k\geqslant 1$. Thus Lemma \ref{Freiman} may be applied iteratively. 

We investigate the second case of this lemma in more detail. If $A\cdot A^{-1}=G$ it follows from the Ruzsa triangle inequality (see Chapter 2 of \cite{Ta-Vu06}) that $\lvert A\cdot A \rvert \geqslant  \left(\frac{\lvert G\rvert}{\lvert A\rvert}\right)^{\frac{1}{2}}\lvert A\rvert$ and hence the same bound $\lvert A\cdot A \rvert\geqslant \frac{3}{2} \lvert A \rvert$ applies, provided that $\lvert A \rvert \leqslant \frac{2^2}{3^2}\lvert G \rvert$. In the case when $ \frac{2^2}{3^2}\lvert G \rvert\leqslant \lvert A\rvert \leqslant \frac{1}{2}\lvert G \rvert$, we have the estimate $\lvert A\cdot A \rvert\geqslant \sqrt{2} \lvert A \rvert$. In particular we have $\lvert A\cdot A\vert > \frac{1}{2}\lvert G \rvert$, and hence $A^{(4)}=G$. 

We let $G=(\mathbb{Z}/q\mathbb{Z})^\times$. To prove part (i) of Theorem \ref{exact theorem}, let $\eta = q^{-\frac{1}{16}+\epsilon}$ and apply Lemma \ref{Freiman} iteratively,  starting with $A= P_\eta^{(6)}$. Lemma \ref{coset proposition} ensures that the hypotheses of Lemma \ref{Freiman} are satisfied. Inspecting the proof of Theorem 2 we see that $$G = P_\eta^{(3)}\cup P_\eta^{(4)}\cup P_\eta^{(5)}\cup P_\eta^{(6)}.$$ If $k>k^\prime$ it is clear that the density of $P_\eta^{(k)}$ is at least the density of $P_\eta^{(k^\prime)}$. Therefore we may conclude\footnote{By considering separately the case where $P_\eta^{(3)}$ has small doubling one can restrict to the case $\vert P_\eta^{(6)}\vert\geqslant \frac{3}{11}\lvert G \rvert $, but this is not quite enough to reduce the number of iterations of Lemma \ref{Freiman}.} that $\vert P_\eta^{(6)}\vert \geqslant \frac{1}{4}\lvert G \rvert$. Since $\frac{1}{4}>\frac{2}{3}\times \frac{2}{3}\times  \frac{1}{2}$ we may apply Lemma \ref{Freiman} twice, and then the trivial observation that $\lvert S \rvert > \frac{1}{2}\vert G \vert$ implies $S\cdot S = G$, to conclude that $A^{(8)}= G$. In other words, $P_\eta^{(48)}=(\mathbb{Z}/q\mathbb{Z})^\times$. 

To prove part (ii) of Theorem \ref{exact theorem}, we note that $P_\eta^{(k(\epsilon))}$ has positive density -- either by Theorem 1 or Theorem 2 (ii) -- and so iterating Lemma \ref{Freiman}, starting with $P_\eta^{(k(\epsilon))}$, gives the result. 
\end{proof}

Since Theorem \ref{almost prime theorem} establishes that $P_\eta^{(6)}$ is very large, the proof of Theorem \ref{exact theorem} (i) does not require the full generality of Lemma \ref{coset proposition}. Elliott's bounds \cite{El73} for the least prime quadratic, cubic, and quartic residues suffice in this case. 
\bibliographystyle{alpha}
\bibliography{multschnirelmann}
\end{document}